\DeclareMathOperator{\GL}{GL}
\DeclareMathOperator{\End}{End}
\DeclareMathOperator{\MT}{MT}
\DeclareMathOperator{\tot}{tot}
\DeclareMathOperator{\SO}{SO}
 \title{On the Mumford--Tate conjecture for hyperk\"{a}hler varieties}
 \author{Salvatore Floccari}
 \address{Radboud University, IMAPP, Nijmegen, The Netherlands.\newline }
 \email{s.floccari@math.ru.nl}
\begin{document}
 
 \keywords{Mumford--Tate conjecture, hyperk\"ahler varieties, motives, Hodge theory}
  \subjclass{14C30, 14F20, 14J20, 14J32, 53C26}
  
\maketitle
\begin{abstract}
	We study the Mumford--Tate conjecture for hyperk\"{a}hler varieties. We show that the full conjecture holds for all varieties deformation equivalent to either an Hilbert scheme of points on a K3 surface or to O'Grady's ten dimensional example, and all of their self-products. For an arbitrary hyperk\"{a}hler variety whose second Betti number is not 3, we prove the Mumford--Tate conjecture in every codimension under the assumption that the K\"{u}nneth components in even degree of its Andr\'{e} motive are abelian. Our results extend a theorem of Andr\'{e}.
\end{abstract}

	%\stepcounter{footnote}
	
 	\section{Introduction}
 	\setcounter{subsection}{-1}
 	\subsection{} 
 	Let $k \subset \mathbb{C}$ be a finitely generated field, with algebraic closure $\bar{k} \subset \mathbb{C}$, and let $\ell$ be a prime number. Given a smooth and projective variety $X$ over $k$, Artin's comparison theorem gives a canonical identification of $\mathbb{Q}_\ell$-vector spaces \[
 	H^i_{\text{B}}\bigl(X(\mathbb{C}),\mathbb{Q}\bigr) \otimes_\mathbb{Q} \mathbb{Q}_\ell \cong H^i_{\textit{\'{e}t}}\bigl(X_{\bar{k}},\mathbb{Q}_\ell\bigr)
 	\]
 	between singular cohomology groups of $X(\mathbb{C})$ and $\ell$-adic cohomology groups of $X_{\bar{k}}$. 
 	
 	Both sides come with additional structure, namely, a Hodge structure on the left hand side and a Galois representation on the right hand side. These data are encoded in the corresponding tannakian fundamental groups. The Mumford--Tate conjecture predicts that Artin's comparison isomorphism identifies the two groups. We refer to this statement for $i=2j$ as the Mumford--Tate conjecture in codimension~$j$ for~$X$. 
 	
 	The Mumford--Tate conjecture is a difficult open problem. It is known only in a very limited number of cases, see \cite[\S2.4, \S3.3, \S4.4]{moonen17} for a recent survey. 	
 	
 	\subsection{Results}

 Our main result establishes the Mumford--Tate conjecture for hyperk\"{a}hler varieties $X$ over $k$ that are of $\mathrm{K}3^{[m]}$ or $\mathrm{OG}10$-type, \textit{i.e.}, such that the complex manifold~$X(\mathbb{C})$ is a deformation of the Hilbert scheme of zero-dimensional subschemes of length~$m$ on a $\mathrm{K}3$ surface \cite{beauville1983varietes} or of O'Grady's ten dimensional hyperk\"{a}hler manifold \cite{O'G99}. The second Betti number of $X$ is $23$ in the first case and $24$ in the second.
 		
 	\begin{theo}\label{mtcK3type}  
 		Let $X$ be a hyperk\"{a}hler variety of either $\mathrm{K3}^{[m]}$ or $\mathrm{OG}10$-type.
 		Then, the Mumford--Tate conjecture holds in any codimension for $X$ and for all self-products $X^{j}$.
 	\end{theo}
 	
 	Our second result establishes the Mumford--Tate conjecture in any codimension for a hyperk\"{a}hler variety with $b_2>3$ whose even Andr\'{e} motive is abelian.
 	
 	\begin{theo}\label{abelianityMTC}
 		Let $X$ be a hyperk\"{a}hler variety such that $b_2(X)>3$. Assume that, for all $i\geq 0$, the even component of the Andr\'{e} motive of $X$ is an abelian motive. Then, the Mumford--Tate conjecture holds in any codimension for $X$. In particular, the Hodge and Tate conjecture for $X$ are equivalent.
 	\end{theo}
 
 The work \cite{KSV2017} suggests that any hyperk\"{a}hler variety has abelian Andr\'{e} motive, however, for the time being, this statement remains a conjecture (but see \textbf{\S\ref{subsec:relatedWork}}). 
 By definition, the second Betti number of a hyperk\"{a}hler variety is always at least $3$; all known examples satisfy $b_2>3$ and it is believed that no hyperk\"{a}hler variety with~$b_2=3$ exists \cite[Question 4]{beauville2011holomorphic}.

 	\subsection{Overview of the contents} 	
 	 We recall in \S\textbf{\ref{section:Mumford-Tate}} the statement of the Mumford--Tate conjecture and its motivic version; throughout, we will use the category of motives constructed by Andr\'{e} in \cite{andre1996Motives}. The following theorem is essentially proven in~\cite{Andre1996}, and it has been generalized in~\cite{moonen2017}.
 	\begin{theo}[Andr\'{e}]\label{codimension1}
 		Let $X$ be a hyperk\"{a}hler variety such that $b_2(X)>3$. Then, the motivic Mumford--Tate conjecture in codimension $1$ holds for~$X$.
 	\end{theo}
    The main tool used in the proof of this result is the Kuga-Satake construction in families, building on ideas due to Deligne \cite{deligne1971conjecture}. The assumption that $b_2(X)>3$ ensures the existence of non-trivial deformations of $X$, as otherwise the moduli space of hyperk\"{a}hler varieties deformation equivalent to $X$ would be zero-dimensional.

    With $X$ as above, we consider the even part $H_{\mathrm{B}}^+(X)$ of the singular cohomology with rational coefficients of $X(\mathbb{C})$, so $H_{\mathrm{B}}^+(X)=\bigoplus_i H_{\mathrm{B}}^{2i}(X)$. A crucial ingredient for us is the action of a~$\mathbb{Q}$-Lie algebra~$\mathfrak{g}_{\tot}(X)$ on $H_{\mathrm{B}}^+(X)$. This construction is due to Verbitsky \cite{verbitsky1996cohomology} and Looijenga-Lunts \cite{looijenga1997lie}; we recall it in \S\textbf{3}. The even singular cohomology of $X$ is the Hodge realization of a motive $\mathcal{H}^+(X)$, whose motivic Galois group is denoted by~$\mathrm{G}_{\mathrm{mot}}^+(X)$. We study the interplay between the actions of this group and of the Lie algebra $\mathfrak{g}_{\tot}(X)$ on $H_{\mathrm{B}}^+(X)$. This cohomology algebra carries a Hodge structure, whose Mumford--Tate group is denoted by $\MT^+(X)$; we show in \S\textbf{\ref{section:asplitting}} that $\MT^+(X)$ is a direct factor of the motivic Galois group~$\mathrm{G}^+_{\mathrm{mot}}(X)$. Here, we need to assume that $b_2(X)>3$ since we use Andr\'{e}'s Theorem \ref{codimension1}.
    
    In~\S\textbf{\ref{section:half}} we prove that if $\MT^+(X)$ has finite index in the motivic Galois group $\mathrm{G}_{\mathrm{mot}}^+(X)$, then the Mumford--Tate conjecture holds in arbitrary codimension for $X$, see Proposition \ref{prop:criterion}. The proof of Theorem \ref{abelianityMTC} is given in \S\textbf{\ref{proofsidethm}}; this is in fact a direct consequence of the proposition and a general result on abelian motives due to Andr\'{e}.
    
    In \S\textbf{\ref{k3m}} we complete the proof of our main result Theorem \ref{mtcK3type}. By Proposition~\ref{thesplit}, $\MT^+(X)$ is a direct product factor of $\mathrm{G}_{\mathrm{mot}}^+(X)$; moreover, the complement satisfies various constraints and in particular it commutes with the action of $\mathfrak{g}_{\tot}(X)$, see Lemma \ref{commuta}. 
    For the $\mathrm{K}3^{[m]}$-type, we have a very effective understanding of this action thanks to work of Markman \cite{markman2008}, and we deduce from his results that the Mumford--Tate group has finite index in the motivic Galois group. For the $\mathrm{OG}10$-type, this finiteness follows instead from the complete description of the $\mathfrak{g}_{\tot}(X)$-representation on the cohomology given by Green-Kim-Laza-Robles \cite{green2019llv}. In both cases, we apply Proposition \ref{prop:criterion} to conclude.
    
    \subsection{Related works}\label{subsec:relatedWork}
    
    The abelianity of the Andr\'e motives of varieties of deformation type $\mathrm{K}3^{[m]}$, $\mathrm{Kum}_m$ and $\mathrm{OG}6$ has been recently established by Soldatenkov in~\cite{soldatenkov19}; our Theorem \ref{abelianityMTC} then implies the Mumford--Tate conjecture in arbitrary codimension for these varieties. 
    Successively, together with Lie Fu and Ziyu Zhang we have shown in \cite{floccarifuzhang} the abelianity of the Andr\'{e} motives of varieties of $\mathrm{OG}10$-type, the fourth and last known deformation type of hyperk\"{a}hler manifolds, and established the full statement of the Mumford--Tate conjecture for all products of hyperk\"{a}hler varieties of known deformation type.
    
    In each case, the proof requires a deformation to an explicit example in the given deformation type. We remark that the proof of Theorem \ref{mtcK3type} presented here is different and simpler: it uses neither deformation to a specific example, nor abelianity of the motives involved. We hope that a refinement of this method might lead to a proof of the Mumford--Tate conjecture for arbitrary hyperk\"{a}hler varieties with $b_2> 3$.
   
 \subsection{Notation and conventions}\label{notations}
 Throughout the whole text, $k\subset\mathbb{C}$ will be a finitely generated field with algebraic closure $\bar{k}\subset \mathbb{C}$, and $\ell$ will be a fixed prime number. A hyperk\"{a}hler variety over $k$ is a smooth projective variety over $k$ such that $X(\mathbb{C})$ is a hyperk\"{a}hler manifold, as defined in \S\textbf{\ref{subsection:2.0}}. Given a complex variety $X$, we denote by~$H^{i}(X)$ its rational singular cohomology groups. The word ``motive" always indicates an object of Andr\'{e}'s category of motives (see~\S\textbf{\ref{motives}}).

\subsection*{Acknowledgements} I am most grateful to Ben Moonen and Arne Smeets for their careful reading and the many comments, which substantially improved this~text. I am also thankful to the anonymous referee for his/her comments.
 	
 	\section{The Mumford--Tate conjecture}\label{section:Mumford-Tate}
 	\setcounter{subsection}{-1}
 	\subsection{}\label{subsection:1.0}
 	We refer to \cite{moonen17} and the references therein. With notations and assumptions as in \S\textbf{\ref{notations}}, we let $X$ be a smooth projective variety over the field $k$. We can extract information about $X$ by looking at various cohomology groups.
 	
 	\subsection{Betti cohomology}\label{subsection:bcohomology}
 	We denote by $H^{i}_{\text{B}}(X)$ the $i$-th singular cohomology group with rational coefficients of the complex manifold $X(\mathbb{C})$. It carries a pure polarizable $\mathbb{Q}$-Hodge structure of weight $i$. Associated to $H^{i}_{\text{B}}(X)$ is its Mumford--Tate group~$\MT\bigl(H^{i}_{\text{B}}(X)\bigr)$; it is a reductive, connected algebraic subgroup of~$\GL\bigl(H_{\text{B}}^{i}(X)\bigr)$.
 	
 	\subsection{$\ell$-adic cohomology} \label{subsection:ellcohomology}
 	We write $H^{i}_{\ell}(X)$ for the $i$-th \'{e}tale cohomology group of $X_{\bar{k}}$ with $\mathbb{Q}_{\ell}$-coefficients, which comes with a continuous representation~$\sigma_{\ell}\colon\mathrm{Gal}(\bar{k}/k)\to\GL\bigl(H^{i}_{\ell}(X)\bigr)$; we denote by $\mathcal{G}\bigl(H_{\ell}^{i}(X)\bigr)$ the Zariski closure of the image of $\sigma_{\ell}$. It is an algebraic group over~$\mathbb{Q}_{\ell}$. If $k'/k$ is a field extension, and if $X_{k'}$ denotes the base change of~$X$ to~$k'$, it may happen that $\mathcal{G} \bigl(H^{i}_{\ell}(X_{{k'}})\bigr)$ becomes smaller than~$\mathcal{G} \bigl(H^{i}_{\ell}(X)\bigr)$; however, the connected component of the identity~$\mathcal{G}\bigl(H^{i}_{{\ell}}(X)\bigr)^0$ is stable under finite field extensions, and there exists a finite field extension~$k'/k$ such that $\mathcal{G}\bigl(H^{i}_{\ell}(X_{k'})\bigr)$ becomes connected.
 	
 	\subsection{}\label{subsection:Artincomparison} 
 	Artin's comparison theorem states that, for all $X$ and $i$ as above, there is a canonical isomorphism of $\mathbb{Q}_{\ell}$-vector spaces
 	\[
 	H_{\text{B}}^{i}(X)\otimes\mathbb{Q}_{\ell}\cong H^{i}_{{\ell}}(X) .
 	\]
 	
 	\begin{conj}[Mumford--Tate]\label{mtc}
 		Under the isomorphism of algebraic groups~$\GL\bigl(H_{\mathrm{B}}^{i}(X) \bigr)\otimes\mathbb{Q}_{\ell} \cong \GL\bigl(H^{i}_{{\ell}}(X) \bigr)$ induced by Artin's isomorphism, we have 
 		$$\MT\bigl(H^{i}_{\mathrm{B}}(X)\bigr)\otimes\mathbb{Q}_{\ell}=\mathcal{G} \bigl(H^{i}_{{\ell}}(X)\bigr)^0 .$$
 		The Mumford--Tate conjecture in codimension $j$ for $X$ is this statement for $i=2j$.
 	\end{conj}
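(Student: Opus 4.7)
The Mumford--Tate conjecture is a deep open problem in general, so I will restrict my plan to the hyperk\"{a}hler setting with $b_2(X)>3$ that this paper addresses, where the goal is to promote Andr\'{e}'s codimension-$1$ result (Theorem~\ref{codimension1}) to arbitrary codimension. The natural framework is to package all even-degree cohomology into a single object $H^+_{\mathrm{B}}(X)=\bigoplus_j H^{2j}_{\mathrm{B}}(X)$, regarded as the Hodge realization of a motive $\mathcal{H}^+(X)$, and to compare, inside $\GL\bigl(H^+_{\mathrm{B}}(X)\bigr)$, the Mumford--Tate group $\MT^+(X)$, the connected $\ell$-adic monodromy group, and the motivic Galois group $\mathrm{G}^+_{\mathrm{mot}}(X)$. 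Since the motivic group dominates both of the others, it will be enough to prove $\MT^+(X)=\mathrm{G}^+_{\mathrm{mot}}(X)$, and in fact equality up to finite index will do.

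The first main step is to exploit extra structure on $H^+_{\mathrm{B}}(X)$: the canonical action of the Verbitsky--Looijenga--Lunts Lie algebra $\mathfrak{g}_{\tot}\bigl(X(\mathbb{C})\bigr)$, whose generators are Lefschetz and dual Lefschetz operators and which therefore interacts naturally with the motivic Galois action. Combining these operators with Andr\'{e}'s codimension-$1$ theorem, I would try to produce a canonical direct-product decomposition $\mathrm{G}^+_{\mathrm{mot}}(X)=\MT^+(X)\times H$ in which $\MT^+(X)$ is distinguished by its action on the degree-$2$ part and the complement $H$ is forced to commute with all of $\mathfrak{g}_{\tot}$. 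This structural ``splitting'' is where I expect to spend the most effort.

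Granted such a splitting, the MTC reduces to showing $H$ is trivial, and in fact showing that $\MT^+(X)$ has finite index in $\mathrm{G}^+_{\mathrm{mot}}(X)$ is enough by a standard Galois-theoretic comparison argument. I would then attack the finiteness in the two regimes of Theorems~\ref{mtcK3type} and~\ref{abelianityMTC} separately. For the $\mathrm{K}3^{[m]}$-type, I would invoke Markman's explicit description of the action of $\mathfrak{g}_{\tot}$ to show that the commutation constraint on $H$ is so rigid that no nontrivial connected complement can fit. Under the hypothesis that the even Andr\'{e} motive is abelian, the finiteness should instead follow from a general theorem of Andr\'{e} relating, for abelian motives, the motivic Galois group and the Mumford--Tate group up to a finite quotient.

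The hard part will be the canonical splitting itself. The motivic Galois group is defined only up to conjugation and need not respect the grading on $H^+_{\mathrm{B}}(X)$, so carving out a canonical ``Hodge part'' inside it requires delicately combining the codimension-$1$ MTC---which controls only the degree-$2$ representation---with the $\mathfrak{g}_{\tot}$-action, which must then propagate that control to the higher even pieces. Once the splitting and its commutation property are in place, verifying that the complement is finite in each of the two cases should be comparatively soft, the input coming from Markman's representation-theoretic computation on the one hand and from Andr\'{e}'s theorem on abelian motives on the other.
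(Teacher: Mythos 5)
Your proposal correctly treats the statement as an open conjecture and restricts to the paper's actual results, and the plan you outline is essentially the paper's own: use the Verbitsky--Looijenga--Lunts algebra together with Andr\'{e}'s codimension-$1$ theorem to split $\mathrm{G}^+_{\mathrm{mot}}(X)$ as $P(X)\times\MT^+(X)$ with $P(X)$ commuting with $\mathfrak{g}_{\tot}$ (Proposition~\ref{thesplit}, Lemma~\ref{commuta}), reduce the conjecture for $\mathcal{H}^+(X)$ to finiteness of $P(X)$ (Proposition~\ref{prop:criterion}), and then conclude via Markman's description of the $\SO(H)$-representation for $\mathrm{K}3^{[m]}$-type and via Andr\'{e}'s theorem on abelian motives in the abelian case. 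This matches the paper's argument in both structure and key inputs.
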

 	
 	\subsection{Motives}\label{motives}
 	A third algebraic group is often useful in order to compare the two groups involved in the Mumford--Tate conjecture. Let $\mathrm{Mot}_k$ be the category of Andr\'{e} motives over $k$ from~\cite{andre1996Motives}; it is a $\mathbb{Q}$-linear neutral tannakian semisimple category. We will denote motives by calligraphic letters. Let $\mathcal{M}\in \mathrm{Mot}_k$. For a field extension~$k'/k$, we let~$\mathcal{M}_{k'}$ be the motive over $k'$ obtained from $\mathcal{M}$ via base change.
 	
 	\subsection{}\label{subsection:gmot}
 	The inclusion $k\subset\mathbb{C}$ determines a realization functor from $\mathrm{Mot}_k$ to the category of polarizable $\mathbb{Q}$-Hodge structures, and we write $\mathcal{M}_{\text{B}}$ for the Hodge realization of the motive~$\mathcal{M}$. The composition with the forgetful functor to $\mathbb{Q}$-vector spaces is a fibre functor on $\mathrm{Mot}_k$; the tannakian formalism then yields a reductive~$\mathbb{Q}$-algebraic group $\mathrm{G}_{\mathrm{mot}}(\mathcal{M})$, which is a subgroup of~$\GL(\mathcal{M}_{\text{B}})$. The tannakian subcategory $\langle \mathcal{M}\rangle^{\otimes}$ of $\mathrm{Mot}_{k}$ generated by $\mathcal{M}$ is equivalent to the category of finite dimensional representations of $\mathrm{G}_{\mathrm{mot}}(\mathcal{M})$. We call this group the motivic Galois group of $\mathcal{M}$.
 	
 	\subsection{} \label{subsection:realizations}
 	The prime $\ell$ determines another realization functor to the category of $\ell$-adic Galois representations; we write~$\mathcal{M}_{\ell}$ for the Galois representation attached to the motive $\mathcal{M}$. We obtain an algebraic group $\mathrm{G}_{\mathrm{mot}, \ell}(\mathcal{M})\subset\GL(\mathcal{M}_{\ell})$ over $\mathbb{Q}_{\ell}$ such that the category of its finite dimensional representations is equivalent to $\langle\mathcal{M} \rangle^{\otimes}\otimes\mathbb{Q}_{\ell}$. Artin's comparison theorem yields an isomorphism~$\mathcal{M}_{\text{B}}\otimes\mathbb{Q}_{\ell}\cong \mathcal{M}_{\ell}$,
 	inducing an identification~$
 	\mathrm{G}_{\mathrm{mot}}(\mathcal{M})\otimes\mathbb{Q}_{\ell}=\mathrm{G}_{\mathrm{mot},\ell}(\mathcal{M})
 	$
 	of subgroups of $\GL(\mathcal{M}_{\text{B}})\otimes\mathbb{Q}_{\ell}\cong \GL(\mathcal{M}_{\ell})$.
 	
 	\subsection{}\label{subsection:basechange}
 	We refer to \cite[\S3.1]{moonen17} for an enlightening discussion of the behaviour of $\mathrm{G}_{\mathrm{mot}}(\mathcal{M})$ under extensions of the base field. It suffices to say that there exists a finite field extension $k^{\diamond}/k$ such that $\mathrm{G}_{\mathrm{mot}}(\mathcal{M}_{k'})\cong \mathrm{G}_{\mathrm{mot}}(\mathcal{M}_{k^{\diamond}})$ for all field extensions~$k'/k^{\diamond}$.
 	
 	\begin{conj}[Motivic Mumford--Tate] \label{mtcmot}
 		For any motive $\mathcal{M}\in \mathrm{Mot}_k$, we have 
 		\[
 		\MT\bigl(\mathcal{M}_{\mathrm{B}} \bigr) = \mathrm{G}_{\mathrm{mot}}\bigl(\mathcal{M}_{\bar{k}}\bigr),\ 
 		\mathrm{and}\ 	
 		\mathcal{G}\bigl(\mathcal{M}_{\ell}\bigr)^0 =
 		\mathrm{G}_{\mathrm{mot}, \ell}\bigl(\mathcal{M}_{\bar{k}}\bigr).
 		\] 
 	\end{conj}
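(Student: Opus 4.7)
The statement is the motivic Mumford--Tate conjecture, which is widely open in general; I would not attempt a proof of the full conjecture but rather sketch the standard framework and indicate where the essential difficulty lies.

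First, I would establish the two ``easy'' inclusions $\MT(\mathcal{M}_{\mathrm{B}}) \subseteq \mathrm{G}_{\mathrm{mot}}(\mathcal{M}_{\bar{k}})$ and $\mathcal{G}_{\ell}(\mathcal{M}_{\ell})^0 \subseteq \mathrm{G}_{\mathrm{mot}}(\mathcal{M}_{\bar{k}}) \otimes \mathbb{Q}_{\ell}$. Andr\'e's category of motives is built so that the Betti and $\ell$-adic realization functors are faithful exact tensor functors; equivalently, motivated cycles are Hodge classes on the Betti side and Galois-invariant classes on the $\ell$-adic side. After base change to $\bar{k}$ the Galois action on motivated tensors of $\mathcal{M}_{\bar{k}}$ is trivial, so the space of Hodge (resp.\ Tate) tensors in every tensor construction on $\mathcal{M}_{\mathrm{B}}$ (resp.\ $\mathcal{M}_{\ell}$) contains all motivated tensors. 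The tannakian formalism then translates this inclusion of fixed tensors into the asserted inclusions of algebraic subgroups of $\GL(\mathcal{M}_{\mathrm{B}})$ (resp.\ $\GL(\mathcal{M}_{\ell})$).

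The hard part is the reverse inclusions. The equality $\MT(\mathcal{M}_{\mathrm{B}}) = \mathrm{G}_{\mathrm{mot}}(\mathcal{M}_{\bar{k}})$ amounts to showing that every Hodge tensor in the tensor algebra of $\mathcal{M}_{\mathrm{B}}$ is already motivated, which for motives of the form $\mathcal{H}(X)$ is essentially a Hodge-type conjecture for all powers of $X$. The second equality is the analogous Tate-type statement. Proving either in full generality is out of reach of current techniques, and there is no direct strategy that works for an arbitrary motive $\mathcal{M}$.

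A realistic plan is therefore to restrict to motives $\mathcal{M}$ for which one can geometrically exhibit enough motivated cycles to force the motivic Galois group to be no bigger than the Mumford--Tate group, or to compare both to a third group (such as the Hodge group of an abelian variety, which is known to coincide with its motivic Galois group). For the hyperk\"ahler motives relevant to this paper, I would expect to exploit the Verbitsky--Looijenga--Lunts Lie algebra action on even cohomology, Andr\'e's codimension-one theorem (Theorem~\ref{codimension1}), and whenever available a reduction to the abelian motive case. The main obstacle, shared by every such approach, is the same: without a geometric source of motivated classes, there is no mechanism to constrain the motivic Galois group from above by what the Mumford--Tate group sees.
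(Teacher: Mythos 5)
This statement is a conjecture, not a theorem: the paper offers no proof of it, only the standard inclusions $\MT(\mathcal{M}_{\mathrm{B}})\subset \mathrm{G}_{\mathrm{mot}}(\mathcal{M}_{\bar{k}})$ and $\mathcal{G}_{\ell}(\mathcal{M}_{\ell})^0\subset \mathrm{G}_{\mathrm{mot}}(\mathcal{M}_{\bar{k}})\otimes\mathbb{Q}_{\ell}$, which you reproduce correctly, before proving the conjecture only in special cases. Your assessment — that the reverse inclusions are the open content and that progress requires the codimension-one theorem, the Verbitsky--Looijenga--Lunts action, or a reduction to abelian motives — matches exactly the strategy the paper actually follows for its Theorems~\ref{mtcK3type} and~\ref{abelianityMTC}, so your proposal is correct and in line with the paper.
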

 The conjecture is the conjunction of the motivic Hodge and Tate conjectures: the first says that Hodge classes are motivated, hence 
 $ \MT\bigl(\mathcal{M}_{\mathrm{B}} \bigr) = \mathrm{G}_{\mathrm{mot}}\bigl(\mathcal{M}_{\bar{k}}\bigr)$, and the second says that Tate classes are motivated, and hence $\mathcal{G}\bigl(\mathcal{M}_{\ell}\bigr)^0 =
 \mathrm{G}_{\mathrm{mot}, \ell}\bigl(\mathcal{M}_{\bar{k}}\bigr)$. These statements are weak versions of the usual Hodge and Tate conjectures respectively.
  
 We summarize a few known facts about these groups.
 \begin{itemize}
 	\item There are natural inclusions
 	\[
 	\MT(\mathcal{M}_{\text{B}})\subset \mathrm{G}_{\mathrm{mot}}(\mathcal{M}_{\bar{k}})\ \mathrm{and}\ \mathcal{G} (\mathcal{M}_{\ell})^0\subset \mathrm{G}_{\mathrm{mot}}(\mathcal{M}_{\bar{k}})\otimes\mathbb{Q}_{\ell} .
 	\]
 	\item The algebraic group $\MT(\mathcal{M}_{\text{B}})$ is connected and reductive. On the other hand, $\mathcal{G} (\mathcal{M}_{\ell})^0$ is not known to be reductive, while $\mathrm{G}_{\mathrm{mot}}(\mathcal{M}_{\bar{k}})$ is reductive, but not known to be connected in general.
 \end{itemize}
 	
 	\subsection{} \label{subsection:kunnethmotives}
 	There are contravariant functors $\mathcal{H}^{i}$ from the category of smooth projective varieties over $k$ to $\mathrm{Mot}_k$, such that, for any smooth projective variety~$X$ over $k$, we have
 	\[
 	\mathcal{H}^{i}(X)_{\text{B}} = H^{i}_{\text{B}}(X)\ \ \mathrm{and}\ \ \mathcal{H}^{i}(X)_{\ell}= H^{i}_{\ell}(X) .
 	\]
 	Therefore, Conjecture~\ref{mtcmot} implies Conjecture~\ref{mtc}. We refer to Conjecture~\ref{mtcmot} for the motive $\mathcal{H}^{2j}(X)$ as the motivic Mumford--Tate conjecture for $X$ in codimension~$j$.
 	
 	\subsection{Abelian motives} \label{subsection:abelianmotives}
 	A motive $\mathcal{M}\in \mathrm{Mot}_k$ is abelian if it belongs to the tannakian subcategory generated by the motives of all abelian varieties over $k$. We will need the following theorem due to Andr\'{e} \cite{andre1996Motives}, which improves Deligne's result on absolute Hodge classes on abelian varieties from \cite{deligne1982hodge}.
 	
 	\begin{theo}\label{thm:abelianMH}
 		Let $\mathcal{M}\in \mathrm{Mot}_k$ be an abelian motive. Then we have 
 		\[
 		\MT(\mathcal{M}_{\mathrm{B}})=\mathrm{G}_{\mathrm{mot}}(\mathcal{M}_{\bar{k}}) .
 		\]
 		\end{theo}
 	
 	\section{Hyperk\"{a}hler varieties}\label{section:hyperkahler}
 	\setcounter{subsection}{-1}
 	\subsection{} \label{subsection:2.0}
 	In this section, we work over the complex numbers. A hyperk\"{a}hler manifold $X$ is a connected, simply connected, compact K\"{a}hler manifold admitting a nowhere degenerate holomorphic~$2$-form which spans $H^{0,2}(X)$. At times, we use the expression ``hyperk\"{a}hler variety" instead of writing~``projective hyperk\"{a}hler manifold". The dimension of such a manifold is always even; hyperk\"{a}hler surfaces are $\mathrm{K}3$ surfaces. The second cohomology group of a hyperk\"{a}hler manifold $X$ carries a canonical symmetric bilinear form, the Beauville--Bogomolov form, which is non-degenerate and deformation invariant, and yields a morphism of Hodge structures~$H^{2}(X)(1)\otimes H^{2}(X)(1)\to \mathbb{Q}$. We refer to \cite{beauville1983varietes} and \cite{huybrechts1999compact} for a proper introduction to the subject.
 	\medskip
 	
 	Let $X$ be a complex hyperk\"{a}hler variety of dimension $2n$. The rational cohomology~$H^*(X)$ of $X$ is a graded algebra via cup product. Verbitsky and Looijenga-Lunts studied in \cite{verbitsky1996cohomology} and \cite{looijenga1997lie} a Lie algebra action on $H^*(X)$, which we describe below.
 	%See also~\cite{looijenga1997lie} for the more general theory of Lefschetz modules.
 	
 	\subsection{}\label{subsection:jacobsonmorozov}
 	Let $\theta\in \End\bigl(H^*(X)\bigr)$ be the degree $0$ endomorphism whose action on~$H^j(X)$ is multiplication by $j-2n$, for all $j$. Given $x\in H^2(X)$, we denote by $L_x$ the endomorphism of $H^*(X)$ which maps a cohomology class $\alpha$ to the product~$x\wedge \alpha$. We say that a class~$x\in H^2(X)$ has the Lefschetz property if, for all positive integers~$j$, the map~$L^j_x\colon H^{2n-j}(X)\to H^{2n+j}(X)$ is an isomorphism. 
 	The Lefschetz property for~$x\in H^2(X)$ is equivalent to the existence of~$\Lambda_x\in \End\bigl(H^*(X)\bigr)$ such that~$L_x,\theta$, and~$\Lambda_x$ form an~$\mathfrak{sl}_2$-triple, \textit{i.e.}, we have
 	\[
 	[\theta, L_x]=2L_x,\ \ [\theta,\Lambda_x]=-2\Lambda_x\ \ \mathrm{ and} \ \ [L_x,\Lambda_x]=\theta.
 	\]
 	Once it exists, the endomorphism $\Lambda_x$ is uniquely determined, see for instance Proposition~1.4.6 in~\cite[Expos\'{e} X]{giraud1968dix}. 
 	
 	\subsection{} \label{definition}
 	We define $\mathfrak{g}_{\tot}(X)$ as the smallest Lie subalgebra of~$\mathfrak{gl}\bigl(H^*(X)\bigr)$ containing~$L_x$, for all $x\in H^2(X)$, and $\Lambda_x$, for all~$x\in H^{2}(X)$ with the Lefschetz property. The first Chern class of an ample divisor on~$X$ has the Lefschetz property by the Hard Lefschetz theorem. It is shown in \cite[\S(1.9)]{looijenga1997lie} that~$\mathfrak{g}_{\tot}(X)$ is a semisimple~$\mathbb{Q}$-Lie algebra, which is evenly graded by the adjoint action of~$\theta$, so that~$\mathfrak{g}_{\tot}(X)=\bigoplus_i \mathfrak{g}_{2i}(X)$. The action of $\mathfrak{g}_{\tot}(X)$ on the cohomology of $X$ preserves the even and odd cohomology, and the Lie subalgebra~$\mathfrak{g}_0(X)$ consists of the endomorphisms contained in $\mathfrak{g}_{\tot}(X)$ which preserve the grading of $H^*(X)$. The construction does not depend on the complex structure of $X$; therefore,~$\mathfrak{g}_{\tot}(X)$ is deformation~invariant.
 	
 	\subsection{}\label{thestructure}
 	Let now $H$ denote the space~$H^2(X)$ equipped with the Beauville-Bogomolov form. Let $\tilde{H}$ denote the orthogonal direct sum of $H$ with $U=\langle v,w\rangle$ equipped with the form $-2vw$. We summarize the main properties of the Lie algebra~$\mathfrak{g}_{\tot}(X)$.
 	
 	\begin{theo}\label{gtot}
 		\begin{enumerate}
 			\item[$\mathrm{(a)}$] \label{struct}
 			There is an isomorphism $\mathfrak{g}_{\tot}(X)\cong\mathfrak{so}(\tilde{H})$ of $\mathbb{Q}$-Lie algebras, which maps the element $\theta\in\mathfrak{g}_{\tot}(X)$ to the element of $\mathfrak{so}(\tilde{H})$ which acts as multiplication by $-2$ on $v$, by $2$ on $w$, and by $0$ on $H$.
 			\item[$\mathrm{(b)}$] \label{b}
 			We have
 			\[
 			\mathfrak{g}_{\tot}(X)=\mathfrak{g}_{-2}(X)\oplus\mathfrak{g}_{0}(X)\oplus\mathfrak{g}_{2}(X).
 			\]
 			Moreover, $\mathfrak{g}_0(X) \cong \mathfrak{so}(H ) \oplus\mathbb{Q}\cdot\theta$, and $\theta$ is central in $\mathfrak{g}_{0}(X)$. The abelian subalgebra $\mathfrak{g}_{2}(X)$ is the linear span of the endomorphisms $L_x$, and $\mathfrak{g}_{-2}(X)$ is the span of the $\Lambda_x$, for $x\in H^2(X)$ with the Lefschetz property. 
 			\item[$\mathrm{(c)}$] The Lie subalgebra $\mathfrak{g}_0(X)$ acts via derivations on the graded algebra $H^*(X)$. The induced action of $\mathfrak{so}(H)\subset\mathfrak{g}_0(X)$ on $H^2(X)=H$ is the standard representation.
 		\end{enumerate}
 	\end{theo}
 	
 	The above theorem is proven in \cite{verbitsky1996cohomology}, and in~\cite[Proposition~4.5]{looijenga1997lie}. A proof can also be found in the appendix of \cite{KSV2017}. These proofs are carried out with real coefficients, but immediately imply the result with rational coefficients: since $\mathfrak{g}_{\tot}(X)$ is defined over~$\mathbb{Q}$, the equality  
 	$\mathfrak{g}_{\tot}(X)\otimes\mathbb{R}=\mathfrak{so}(\tilde{H})\otimes {\mathbb{R}}$ of Lie subalgebras of $\mathfrak{gl}(\tilde{H})\otimes \mathbb{R}$ shows that the same equality already holds with rational coefficients.
 	
 	\subsection{} \label{subsection:rho}
 	We know from Theorem~\ref{gtot} that the semisimple part of $\mathfrak{g}_{0}(X)$ is isomorphic to~$\mathfrak{so}(H)$. We denote by 
 	\[
 	\rho\colon\mathfrak{so}(H)\to\prod_j \mathfrak{gl}\bigl(H^{j}(X)\bigr)
 	\] 
 	the restriction of the representation $ \mathfrak{g}_0(X)\to \prod_j \mathfrak{gl}\bigl(H^{j}(X)\bigr)$ to the Lie subalgebra~$\mathfrak{so}(H)$. We also let~$\rho^+\colon\mathfrak{so}(H)\to \prod_i \mathfrak{gl}\bigl(H^{2i}(X)\bigr)$ denote the representation induced by $\rho$ on the even cohomology of $X$.
 	
 	\begin{prop} \label{rhoplus}
 		The representation $\rho^+\colon\mathfrak{so}(H)\to \prod_i \mathfrak{gl}\bigl(H^{2i}(X)\bigr)$ integrates to a faithful representation
 		\[
 		\tilde{\rho}^+\colon  \SO(H)\to \prod_i\GL\bigl(H^{2i}(X)\bigr), 
 		\]
 		such that $\pi_2\circ\tilde{\rho}^+\colon\SO(H)\to\GL\bigl(H^2(X)\bigr)=\GL(H)$ is the standard representation, where $\pi_2$ is the obvious projection $\prod_i\GL\bigl(H^{2i}(X )\bigr)\to \GL\bigl(H^2(X )\bigr)$.
 	\end{prop}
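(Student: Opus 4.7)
The strategy is to integrate the $\mathfrak{so}(H)$-action indirectly: I would first integrate the larger action of $\mathfrak{g}_{\tot}(X)\cong\mathfrak{so}(\tilde H)$ on the even cohomology to its simply connected cover, then argue that on $H^+(X)$ the result descends to $\SO(\tilde H)$, and finally restrict to $\SO(H)\subset\SO(\tilde H)$.

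By Theorem~\ref{gtot}, $\mathfrak{so}(\tilde H)$ acts on $H^+(X)$, and since $\mathfrak{so}(\tilde H)$ is semisimple this integrates uniquely to an algebraic representation $\hat{\rho}\colon\Spin(\tilde H)\to\GL\bigl(H^+(X)\bigr)$. The crucial step is to show that $\hat{\rho}$ factors through $\SO(\tilde H)=\Spin(\tilde H)/\{\pm 1\}$. I would prove this by a weight argument: choose a Cartan subalgebra $\mathfrak{h}\subset\mathfrak{so}(\tilde H)$ containing $\theta/2$, and coordinates $e_1,\dots,e_r$ on $\mathfrak{h}^*$ in which $w,v\in\tilde H$ are weight vectors of the standard representation of weights $+e_1$ and $-e_1$, respectively; this is possible because, by Theorem~\ref{gtot}(a), $\theta/2$ already acts diagonally on $\tilde H$ with eigenvalues $+1$ on $w$, $-1$ on $v$, and $0$ on $H$. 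On $H^{2i}(X)$, $\theta/2$ acts by the integer $i-n/2$ (using that the complex dimension $n$ of $X$ is even), so every weight $\mu$ of $\mathfrak{h}$ occurring in $H^+(X)$ satisfies $\mu_1\in\mathbb{Z}$. For a simple Lie algebra of type $B_r$ or $D_r$, the coordinates of the weights of any irreducible representation are either all integers or all half-integers; integrality of $\mu_1$ rules out the half-integer (spinor) alternative. Consequently every irreducible summand of $H^+(X)$ is a tensor representation of $\mathfrak{so}(\tilde H)$, and $\hat{\rho}$ factors through $\SO(\tilde H)$.

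Restricting along the inclusion $\SO(H)\hookrightarrow\SO(\tilde H)$ coming from $\mathfrak{so}(H)\hookrightarrow\mathfrak{so}(\tilde H)$ as the pointwise stabilizer of $U$, we obtain an algebraic representation $\tilde{\rho}^+\colon\SO(H)\to\prod_i\GL\bigl(H^{2i}(X)\bigr)$ whose differential coincides with $\rho^+$ by uniqueness of integration for semisimple Lie algebras. By Theorem~\ref{gtot}(c), the restriction of $\rho^+$ to $H^2$ is the standard representation of $\mathfrak{so}(H)$, which integrates to the defining inclusion $\SO(H)\hookrightarrow\GL(H)$; this is exactly $\pi_2\circ\tilde{\rho}^+$, and its faithfulness immediately implies faithfulness of $\tilde{\rho}^+$. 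The principal difficulty is the spinor-exclusion step, which requires the choice of Cartan placing $\theta/2$ as a coordinate of the weight lattice together with the classical dichotomy of weights for orthogonal Lie algebras.
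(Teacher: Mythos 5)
Your argument is correct, but note that the paper does not actually prove Proposition~\ref{rhoplus}: it simply refers to Verbitsky \cite[\S8]{verbitsky1995mirror}. What you supply is a self-contained proof, and it is sound: the $\mathfrak{g}_{\tot}(X)\cong\mathfrak{so}(\tilde H)$-action on $H^+(X)$ integrates over $\mathbb{Q}$ to $\Spin(\tilde H)$ (integration of a $\mathbb{Q}$-rational representation of a semisimple Lie algebra to the simply connected $\mathbb{Q}$-group, unique and hence compatible with Galois descent); since $\theta/2$ can be placed in a Cartan subalgebra as the coweight dual to $e_1$ (it acts by $\pm1$ on $w,v$ and by $0$ on $H$, by Theorem~\ref{gtot}(a)) and acts on $H^{2i}(X)$ by the integer eigenvalue coming from evenness of the complex dimension of a hyperk\"ahler manifold, the integer/half-integer dichotomy for weights of irreducible $\mathfrak{so}_N$-modules excludes all spinorial constituents, so the central $\mu_2$-kernel acts trivially and the representation factors through $\SO(\tilde H)$; restricting to the pointwise stabilizer $\SO(H)$ of $U$, whose Lie algebra is exactly the copy of $\mathfrak{so}(H)\subset\mathfrak{g}_0(X)$, gives $\tilde\rho^+$, and the identification of $\pi_2\circ\tilde\rho^+$ with the standard inclusion (hence faithfulness) follows from Theorem~\ref{gtot}(c) together with the fact that a morphism from the connected group $\SO(H)$ is determined by its differential. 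This parity-of-$\theta/2$ argument is precisely the reason the statement is confined to even cohomology (on odd cohomology half-integral eigenvalues do occur and only $\Spin$ acts), and it is close in spirit to Verbitsky's original proof. Two small points you leave implicit and could spell out: triviality of the $\mu_2$-action, checked over $\mathbb{C}$, suffices to descend through the isogeny over $\mathbb{Q}$; and the image of $\tilde\rho^+$ really lies in $\prod_i\GL\bigl(H^{2i}(X)\bigr)$ because each $H^{2i}(X)$ is $\mathfrak{so}(H)$-stable and $\SO(H)$ is connected.
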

 	We refer to \cite[\S8]{verbitsky1995mirror} for a proof.
 	Note that under the representation $\tilde{\rho}^+$, the group~$\SO(H)$ acts via graded algebra automorphisms on the even cohomology of $X$, by part~(c) of Theorem~\ref{gtot}.
 	
 	\subsection{} \label{subsection:weiloperator}
 	We need to recall one more result. Let $W_{\mathbb{C}}\in\End\bigl(H^*(X,\mathbb{C})\bigr)$ be the endomorphism which acts on each $H^{p,q}(X)$ as multiplication by~$i(p-q)$. It is known that $W_{\mathbb{C}}$ is the~$\mathbb{C}$-linear extension of an endomorphism $W\in\End\bigl(H^*(X,\mathbb{R})\bigr)$, which is called the Weil operator. 
 	 	
 	\begin{theo}\label{weil}
 		The Weil operator $W$ is an element of $\rho\bigl(\mathfrak{so}(H)\bigr)\otimes\mathbb{R}$.
 	\end{theo} 
 	This is proven in~\cite{verbitsky1996cohomology}; see also the appendix to the paper \cite{KSV2017}.
 	
 	\subsection{} \label{subsection:startingpoint}
 	Let~$H^+(X)$ denote the weight~$0$ Hodge structure $\bigoplus_i H^{2i}(X)(i)$, and let~$\MT^+(X)$ denote its Mumford--Tate group. Let~$\pi_2\colon \MT^+(X)\twoheadrightarrow\MT\bigl(H^2(X)(1)\bigr)$ be the projection induced by the inclusion of~$H^{2}(X)(1)$ into $ H^+(X)$.
 	We will deduce the following result from Theorem~\ref{weil}.
 	
 	\begin{coro}\label{MT}
 		The map $\pi_2$ is an isomorphism 
 		\[
 		\MT^+(X)\cong \MT\bigl(H^2(X)(1)\bigr).
 		\]
 		In particular, the weight $0$ Hodge structure $H^+(X)$ belongs to the tensor subcategory of polarizable $\mathbb{Q}$-Hodge structures generated by $H^2(X)(1)$.
 	\end{coro}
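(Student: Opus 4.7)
The surjectivity of $\pi_2$ is automatic by functoriality: since $H^2(X)(1)$ is a sub-Hodge structure of $H^+(X)$, its Mumford--Tate group is the image of $\MT^+(X)$ under the projection $\pi_2$. The real content is injectivity, and my strategy is to sandwich $\MT^+(X)$ inside the $\mathbb{Q}$-algebraic subgroup $\tilde{\rho}^+(\SO(H))\subset \GL(H^+(X))$ constructed in Proposition~\ref{rhoplus}, on which $\pi_2$ will already be injective.

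For the embedding I use the standard characterization of the Mumford--Tate group of a pure weight-$0$ rational Hodge structure $V$: it is the smallest $\mathbb{Q}$-algebraic subgroup of $\GL(V)$ whose Lie algebra, after extension of scalars to $\mathbb{R}$, contains the Weil operator. This is because the defining morphism $h\colon \mathbb{S}\to \GL(V_{\mathbb{R}})$ factors through $U(1)$ and its image is the connected one-parameter subgroup $\exp(\mathbb{R}\cdot W)$. Now $W$ preserves the grading of $H^*(X)$ by cohomological degree (it acts on $H^{p,q}$ by the scalar $i(p-q)$, which lies in degree $p+q$), hence it restricts to an endomorphism of $H^+(X)$; by Theorem~\ref{weil} this restriction lies in $\rho^+(\mathfrak{so}(H))\otimes \mathbb{R}$. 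Since $\tilde{\rho}^+(\SO(H))$ is a $\mathbb{Q}$-algebraic subgroup of $\GL(H^+(X))$ with Lie algebra $\rho^+(\mathfrak{so}(H))$, the characterization yields the inclusion $\MT^+(X)\subset \tilde{\rho}^+(\SO(H))$.

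Injectivity of $\pi_2$ then follows at once from Proposition~\ref{rhoplus}: the composition $\pi_2\circ \tilde{\rho}^+\colon \SO(H)\to \GL(H)$ is the standard representation, which is faithful, so $\pi_2$ is injective on $\tilde{\rho}^+(\SO(H))$ and hence on~$\MT^+(X)$. The ``in particular'' assertion then follows by tannakian formalism: under the isomorphism $\MT^+(X)\cong \MT(H^2(X)(1))$ just obtained, $H^+(X)$ becomes a representation of $\MT(H^2(X)(1))$, and since $H^2(X)(1)$ is a faithful representation of the latter, every finite-dimensional representation of this group---in particular $H^+(X)$---is a subquotient of a tensor construction on $H^2(X)(1)$, and so belongs to the tensor subcategory it generates. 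The only nontrivial input beyond generalities about Mumford--Tate groups is Theorem~\ref{weil}; that is what makes the sandwich argument go through, and in my view it is the main substantive point.
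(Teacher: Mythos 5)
Your argument is correct, and its skeleton is the same as the paper's: everything reduces to the containment $\MT^+(X)\subset\tilde{\rho}^+\bigl(\SO(H)\bigr)$, after which injectivity of $\pi_2$ on $\MT^+(X)$ follows from Proposition~\ref{rhoplus}, surjectivity is the standard fact about Mumford--Tate groups of sub-Hodge structures, and the ``in particular'' clause is the usual tannakian consequence. Where you genuinely diverge is in the proof of the containment, which the paper isolates as Lemma~\ref{contained}: there it is proved via invariant tensors --- any tensor fixed by $\SO(H)$ is annihilated by $\mathfrak{so}(H)\otimes\mathbb{C}$, hence by the Weil operator (Theorem~\ref{weil}), hence is of type $(0,0)$ and therefore fixed by $\MT^+(X)$ --- and then by Deligne's criterion \cite[Proposition~3.1]{deligne1982hodge}, which requires the reductivity of both subgroups. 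You instead use the characterization of the Mumford--Tate group of a weight-zero Hodge structure as the smallest $\mathbb{Q}$-algebraic subgroup of $\GL(V)$ whose real Lie algebra contains the infinitesimal Weil operator (note that this is exactly the paper's normalization of $W$, acting on $H^{p,q}$ by $i(p-q)$, and that Tate twists do not affect $p-q$, so the Weil operator of $H^+(X)$ is indeed the restriction of $W$), together with the observation that $\tilde{\rho}^+\bigl(\SO(H)\bigr)$ is a $\mathbb{Q}$-algebraic subgroup with Lie algebra $\rho^+\bigl(\mathfrak{so}(H)\bigr)$, which contains $W$ after extension to $\mathbb{R}$ by Theorem~\ref{weil}. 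This is a more direct and slightly more elementary route: it dispenses with the invariant-tensor criterion and with any appeal to reductivity, at the cost of the routine checks that the image of $\tilde{\rho}^+$ is a closed $\mathbb{Q}$-subgroup with the expected Lie algebra and that, for a $\mathbb{Q}$-algebraic subgroup $G$, having $W\in\Lie(G)\otimes\mathbb{R}$ is equivalent to $G(\mathbb{R})\supset\exp(\mathbb{R}\cdot W)=h\bigl(U(1)\bigr)$; both checks are standard and your sketch of them is adequate. Both proofs correctly locate Theorem~\ref{weil} as the only substantive input.
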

 	We first prove a lemma.
 	\begin{lemm}\label{contained}
 		We have
 		\[
 		\MT^+(X)\subset \tilde{\rho}^+\bigl(\SO(H)\bigr).
 		\]
 	\end{lemm}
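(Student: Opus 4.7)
The plan is to combine Theorem~\ref{weil} with the standard characterization of the Mumford--Tate group of a weight-zero polarizable Hodge structure as the $\mathbb{Q}$-Zariski closure of the one-parameter subgroup generated by its Weil operator.

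First, I would transfer the statement of Theorem~\ref{weil} from the full cohomology to the even part. The Tate twist in $H^+(X)=\bigoplus_i H^{2i}(X)(i)$ does not affect the action of the Weil operator, so the Weil operator $W^+$ of the weight-zero Hodge structure $H^+(X)\otimes\mathbb{R}$ is simply the restriction of $W$ to the even cohomology. Since $\rho(\mathfrak{so}(H))\subset\mathfrak{g}_0(X)$ preserves the grading on $H^*(X)$, hence the even/odd decomposition, restriction to $H^+(X)$ identifies it with $\rho^+(\mathfrak{so}(H))$. Theorem~\ref{weil} then gives
\[
W^+\in\rho^+\bigl(\mathfrak{so}(H)\bigr)\otimes\mathbb{R} \;=\; \Lie\bigl(\tilde{\rho}^+(\SO(H))\bigr)\otimes\mathbb{R},
\]
where in the last equality I use that $\tilde{\rho}^+$ is faithful by Proposition~\ref{rhoplus}.

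Second, I would invoke the following general fact: for any polarizable $\mathbb{Q}$-Hodge structure $V$ of weight $0$, the morphism $U(1)\to\GL(V_\mathbb{R})$ encoding the Hodge decomposition is the restriction of the real one-parameter subgroup $t\mapsto\exp(tW_V)$ generated by the Weil operator, and the Mumford--Tate group $\MT(V)$ is the smallest $\mathbb{Q}$-algebraic subgroup of $\GL(V)$ whose real points contain this image, equivalently, the $\mathbb{Q}$-Zariski closure of $\{\exp(tW_V):t\in\mathbb{R}\}$ in $\GL(V)$. Applied to $V=H^+(X)$, this realizes $\MT^+(X)$ as the $\mathbb{Q}$-Zariski closure in $\GL(H^+(X))$ of the one-parameter subgroup generated by $W^+$.

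Finally, $\tilde{\rho}^+(\SO(H))$ is a $\mathbb{Q}$-algebraic subgroup of $\prod_i\GL(H^{2i}(X))$, which we view inside $\GL(H^+(X))$ via the direct-sum decomposition. By the first step its real points contain $\exp(tW^+)$ for every $t\in\mathbb{R}$, so they contain the whole $\mathbb{Q}$-Zariski closure of this one-parameter subgroup. This yields $\MT^+(X)\subset\tilde{\rho}^+(\SO(H))$, as claimed. The only delicate point, which I would verify explicitly, is the identification in the second paragraph: one must check that under the conventions in use the Weil operator $W^+$ really is, up to a harmless scalar, the infinitesimal generator of the circle action defining the Hodge structure on $H^+(X)$, so that the passage from $W^+$ to a one-parameter subgroup inside $\MT^+(X)(\mathbb{R})$ is legitimate.
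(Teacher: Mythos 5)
Your proof is correct, but it takes a genuinely different route from the paper. The paper deduces the inclusion from the invariant-tensor criterion: since both $\MT^+(X)$ and $\tilde{\rho}^+\bigl(\SO(H)\bigr)$ are reductive, by \cite[Proposition~3.1]{deligne1982hodge} it suffices to show that every $\SO(H)$-invariant element of a tensor construction on $H^+(X)$ is fixed by $\MT^+(X)$; such an element is annihilated by $\mathfrak{so}(H)\otimes\mathbb{C}$, hence by the Weil operator via Theorem~\ref{weil}, hence is of type $(0,0)$ and therefore a Hodge class, fixed by the Mumford--Tate group. You instead exponentiate: using Theorem~\ref{weil} you place the one-parameter group $\exp(tW^+)$ --- which, for the weight-zero structure $H^+(X)$, is exactly the circle $h(U(1))$ defining the Hodge structure, the Tate twists being harmless --- inside the real points of the $\mathbb{Q}$-algebraic group $\tilde{\rho}^+\bigl(\SO(H)\bigr)$, and conclude by the characterization of $\MT^+(X)$ as the smallest $\mathbb{Q}$-algebraic subgroup of $\GL\bigl(H^+(X)\bigr)$ whose real points contain this circle (equivalently its $\mathbb{Q}$-Zariski closure, valid here because the weight is zero so $h$ is trivial on the central $\mathbb{R}^\times$). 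Both arguments hinge on Theorem~\ref{weil}; yours is more direct and needs neither reductivity of the two groups nor Deligne's criterion, at the cost of the convention-checking you flag, namely that the paper's $W$ (acting by $i(p-q)$) is the infinitesimal generator of $h(U(1))$ and that $\rho^+(\mathfrak{so}(H))\otimes\mathbb{R}=\Lie\bigl(\tilde{\rho}^+(\SO(H))\bigr)\otimes\mathbb{R}$ (for which what matters is that $\tilde{\rho}^+$ integrates $\rho^+$, by Proposition~\ref{rhoplus}, rather than its faithfulness). The paper's argument, by contrast, stays entirely at the level of invariant tensors and Hodge classes and avoids any exponentiation.
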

 	\begin{proof}
 		We identify $\SO(H)$ with its image under the representation $\tilde{\rho}^+$ from Proposition~\ref{rhoplus}. Let $T$ be a tensor construction on $H^+(X)$, by which we mean that $T$ is a finite sum 
 		\[
 		T=\bigoplus_i\ \bigl(H^+(X)\bigr)^{\otimes m_i}\otimes\bigl(H^+(X)\bigr)^{\vee,\otimes n_i}.
 		\]
 		for some integers $m_i$ and $n_i$. Both $\MT^+(X)$ and $\SO(H)$ act on the space $T$, as they are both subgroups of~$\GL\bigl(H^+(X)\bigr)$. In order to show that $\MT^+(X)$ is contained into~$\SO(H)$, it suffices to check that, for all tensor constructions $T$ as above, every element~$\alpha$ of~$T$ fixed by the latter is also fixed by $\MT^+(X)$. Indeed, both groups are reductive, and we can then apply~\cite[Proposition~3.1]{deligne1982hodge} to conclude. Let $\alpha\in T$ be invariant for the $\SO(H)$-action. Then, the image of $\alpha$ in $T\otimes\mathbb{C}$ is in the kernel of every element of $\mathfrak{so}(H)\otimes\mathbb{C}$. By Theorem~\ref{weil}, this implies that~$\alpha$ is of type $(0,0)$; hence $\alpha$ is a Hodge class and it is therefore fixed by the Mumford--Tate group.
 	\end{proof}
 	
 	\begin{proof}[Proof of Corollary~\ref{MT}]
 		It suffices to show that the restriction of the projection~$\pi_2$ to $\MT^+(X)$ is injective. The composition~$\pi_2\circ\tilde{\rho}^+\colon\SO(H) \to\GL\bigl(H^2(X)\bigr)$ is injective thanks to Proposition~\ref{rhoplus}. As~$\MT^+(X)\subset\tilde{\rho}^+\bigl(\SO(H)\bigr)$ by Lemma~\ref{contained}, it follows that the restriction of $\pi_2$ to~$\MT^+(X)$ is injective, too.
 	\end{proof}
 	\begin{rema}\label{remark:ok}
 		The conclusion of Corollary~\ref{MT} is true even without the projectivity assumption on $X$, with the only difference that the Hodge structures involved are not necessarily polarizable.
 	\end{rema}
 	
 	\section{A splitting of the motivic Galois group}\label{section:asplitting}
 	\setcounter{subsection}{-1}
 	\subsection{}\label{subsection:3.0}
 	In this section, $X$ is a complex hyperk\"{a}hler variety; we further assume that $b_2(X)>3$. We consider the weight $0$ motive 
 	\[
 	\mathcal{H}^+(X)\coloneqq \bigoplus_i \mathcal{H}^{2i}(X)(i)\in\mathrm{Mot}_{\mathbb{C}},
 	\]
 	and we denote by~$\mathrm{G}^+_{\mathrm{mot}}(X)\subset \prod_i \GL\bigl(H^{2i}(X)(i)\bigr)$ its motivic Galois group. We let~$\bar{\pi}_2$ be the projection~$\mathrm{G}^+_{\mathrm{mot}}(X)\twoheadrightarrow\mathrm{G}_{\mathrm{mot}}\bigl(\mathcal{H}^2(X)(1)\bigr)$ induced by the inclusion of~$\mathcal{H}^{2}(X)(1)$ into $\mathcal{H}^{+}(X)$, and we define
 	\[
 	P(X)\coloneqq \ker (\bar{\pi}_2)\subset\mathrm{G}_{\mathrm{mot}}^+(X).
 	\]
 	\begin{prop}\label{thesplit}
 		We have
 		\[
 		\mathrm{G}_{\mathrm{mot}}^+(X)= P(X)\times\MT^+(X).
 		\]
 	\end{prop}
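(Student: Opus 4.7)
The plan consists of two steps: first I set up a semidirect product decomposition by exhibiting a section of $\pi_2$, then I upgrade it to a direct product via commutativity.

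For the first step, Andr\'{e}'s Theorem~\ref{codimension1} yields $\mathrm{G}_{\mathrm{mot}}\bigl(\mathcal{H}^2(X)(1)\bigr)=\MT\bigl(H^2(X)(1)\bigr)$; combined with Corollary~\ref{MT}, the projection $\pi_2$ restricts to an isomorphism $\MT^+(X)\xrightarrow{\sim}\MT\bigl(H^2(X)(1)\bigr)$, so that the inclusion $\MT^+(X)\hookrightarrow\mathrm{G}_{\mathrm{mot}}^+(X)$ is a section of $\pi_2$ and $\mathrm{G}_{\mathrm{mot}}^+(X)=P(X)\rtimes\MT^+(X)$.

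For the second step, I plan to prove the stronger statement that $\mathrm{G}_{\mathrm{mot}}^+(X)$ normalizes $\tilde{\rho}^+\bigl(\SO(H)\bigr)$, with conjugation action factoring through $\pi_2$. The key input is motivatedness of the LLV Lie algebra $\mathfrak{g}_{\tot}(X)\subset\End\bigl(H^+(X)\bigr)$: each $L_x$ is algebraic (cup product with $x$); each $\Lambda_x$ for $x$ ample is motivated by Andr\'{e}'s construction of motives; iterated brackets of these produce all of $\mathfrak{g}_{\tot}(X)$, and in particular the semisimple summand $\tilde{\rho}\bigl(\mathfrak{so}(H)\bigr)\subset\mathfrak{g}_0(X)$, as a motivated sub-Lie-algebra of $\End\bigl(H^+(X)\bigr)$. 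Since motivated sub-spaces are preserved by the adjoint action of the motivic Galois group, $\mathrm{G}_{\mathrm{mot}}^+(X)$ conjugates $\tilde{\rho}^+\bigl(\SO(H)\bigr)$ into itself. By the faithfulness of $\tilde{\rho}^+$ and the fact that $\pi_2\circ\tilde{\rho}^+$ is the standard representation (Proposition~\ref{rhoplus}), this conjugation action is determined by its restriction to $H^2(X)(1)$, i.e.\ by $\pi_2$; hence any $g\in P(X)$ centralizes $\tilde{\rho}^+\bigl(\SO(H)\bigr)$ and in particular $\MT^+(X)$, giving $[P(X),\MT^+(X)]=1$ and the direct product.

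The main obstacle is the motivatedness of the full LLV Lie algebra, especially of $\mathfrak{g}_{-2}(X)$: Andr\'{e}'s framework yields $\Lambda_x$ motivated only for $x$ ample. To generate the whole Lie algebra from a single $\Lambda_{x_0}$ together with $\mathfrak{g}_2$ requires a careful use of the concrete presentation $\mathfrak{g}_{\tot}(X)=\mathfrak{so}(\tilde{H})$ from Theorem~\ref{gtot}; the main point is to check that the brackets $[L_h,\Lambda_{x_0}]$ for $h\in H$ suffice, together with $\Lambda_{x_0}$ itself, to produce $\Lambda_h$ for every $h\in H$ after a second round of brackets, thereby yielding all of $\mathfrak{g}_{-2}(X)$ and, via $[\mathfrak{g}_2,\mathfrak{g}_{-2}]=\mathfrak{g}_0(X)$, all of $\mathfrak{g}_{\tot}(X)$.
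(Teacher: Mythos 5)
Your first step coincides with the paper's Lemma~\ref{section}: Theorem~\ref{codimension1} plus Corollary~\ref{MT} give a section of $\pi_2$ with image $\MT^+(X)$, hence $\mathrm{G}_{\mathrm{mot}}^+(X)=P(X)\rtimes\MT^+(X)$. The second step, however, has a genuine gap. Your key input --- that the full LLV algebra $\mathfrak{g}_{\tot}(X)$ is generated by motivated endomorphisms --- is not available: the claim that ``each $L_x$ is algebraic'' is false for a general class $x\in H^2(X)$. The correspondence inducing $L_x$ is motivated only when $x$ itself is an algebraic (divisor) class, and $\Lambda_x$ is known to be motivated only for $x$ a polarization. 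Since for a general hyperk\"ahler variety the N\'eron--Severi group is a proper subspace of $H^2(X)$ (there are transcendental classes), the Lie algebra generated by the motivated operators $L_x$ ($x\in \mathrm{NS}(X)_{\mathbb{Q}}$) and $\Lambda_\omega$ is strictly smaller than $\mathfrak{g}_{\tot}(X)$; in particular it does not contain $\rho\bigl(\mathfrak{so}(H)\bigr)$, so the argument cannot produce the normalization of $\tilde{\rho}^+\bigl(\SO(H)\bigr)$ by $\mathrm{G}_{\mathrm{mot}}^+(X)$ that you need. (Your final reduction --- that a $g\in P(X)$ which normalizes $\tilde{\rho}^+\bigl(\SO(H)\bigr)$ must centralize it, because $\pi_2\circ\tilde{\rho}^+$ is faithful and $\pi_2(g)=1$ --- is fine; it is only the normalization statement whose proof is missing, and proving it directly is essentially as hard as the commutation one is after.)

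The paper obtains the required commutation by a different and purely formal argument (Lemma~\ref{commuta}) which needs no motivatedness of any $L_x$ or $\Lambda_x$: an element $p\in P(X)$ acts on $H^+(X)$ by grading-preserving algebra automorphisms (cup product is a morphism of motives) and acts as the identity on $H^2(X)$; therefore $p$ commutes with $\theta$ and with $L_x$ for \emph{every} $x\in H^2(X)$, transcendental classes included, since $p(x\wedge\alpha)=x\wedge p(\alpha)$. For $x$ with the Lefschetz property, $L_x$, $\theta$, $p\Lambda_x p^{-1}$ is again an $\mathfrak{sl}_2$-triple, and the uniqueness of $\Lambda_x$ (\S\textbf{\ref{subsection:jacobsonmorozov}}) forces $p\Lambda_x p^{-1}=\Lambda_x$. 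As these operators generate $\mathfrak{g}_{\tot}(X)$, $P(X)$ centralizes $\mathfrak{g}_{\tot}(X)$, and since $\MT^+(X)\subset\tilde{\rho}^+\bigl(\SO(H)\bigr)$ by Lemma~\ref{contained}, the two factors commute and the semidirect product is direct. I suggest replacing your motivatedness argument by this observation; as it stands, the central step of your proposal does not go through.
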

 	We will first establish some preliminary results.
 	
 	\begin{lemm}\label{section}
 		There exists a section $s$ of the map $\bar{\pi}_2$,
 		\[
 		s\colon \mathrm{G}_{\mathrm{mot}}\bigl(\mathcal{H}^2(X)(1)\bigr)\hookrightarrow\mathrm{G}_{\mathrm{mot}}^+(X),
 		\]
 		whose image coincides with $\MT^+(X)\subset\mathrm{G}_{\mathrm{mot}}^+(X)$.
 	\end{lemm}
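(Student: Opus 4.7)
The plan is to construct $s$ as the inverse of the restriction of $\pi_{2}$ to the subgroup $\MT^{+}(X) \subset \mathrm{G}_{\mathrm{mot}}^{+}(X)$. Two ingredients immediately suggest themselves: Corollary~\ref{MT}, which already tells us that $\pi_{2}$ sends $\MT^{+}(X)$ isomorphically onto $\MT\bigl(H^{2}(X)(1)\bigr)$, and Andr\'e's Theorem~\ref{codimension1}, which under the hypothesis $b_{2}(X)>3$ gives the equality $\MT\bigl(H^{2}(X)(1)\bigr)=\mathrm{G}_{\mathrm{mot}}\bigl(\mathcal{H}^{2}(X)(1)\bigr)$. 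Combining these two statements should be enough to identify $\MT^{+}(X)$ with all of $\mathrm{G}_{\mathrm{mot}}\bigl(\mathcal{H}^{2}(X)(1)\bigr)$ via $\pi_{2}$.

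Concretely, I would first observe that $\pi_{2}$ maps the Mumford--Tate subgroup $\MT^{+}(X)\subset\mathrm{G}_{\mathrm{mot}}^{+}(X)$ into $\MT\bigl(H^{2}(X)(1)\bigr)\subset\mathrm{G}_{\mathrm{mot}}\bigl(\mathcal{H}^{2}(X)(1)\bigr)$, simply because the projection is induced by the inclusion $\mathcal{H}^{2}(X)(1)\hookrightarrow\mathcal{H}^{+}(X)$ of motives, which realizes to the analogous inclusion of Hodge structures. Next, by Corollary~\ref{MT}, the resulting morphism $\MT^{+}(X)\to \MT\bigl(H^{2}(X)(1)\bigr)$ is an isomorphism. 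Applying Theorem~\ref{codimension1} (valid because $b_{2}(X)>3$) upgrades this to an isomorphism
\[
\pi_{2}\rest_{\MT^{+}(X)}\colon \MT^{+}(X)\xrightarrow{\ \sim\ } \mathrm{G}_{\mathrm{mot}}\bigl(\mathcal{H}^{2}(X)(1)\bigr).
\]
Defining $s$ as the inverse of this isomorphism, we obtain a morphism $s\colon \mathrm{G}_{\mathrm{mot}}\bigl(\mathcal{H}^{2}(X)(1)\bigr)\to \mathrm{G}_{\mathrm{mot}}^{+}(X)$ such that $\pi_{2}\circ s=\mathrm{id}$ and whose image is tautologically $\MT^{+}(X)$.

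There is no serious obstacle: the two non-trivial inputs, namely the codimension~$1$ motivic Mumford--Tate conjecture of Andr\'e and the identification of $\MT^{+}(X)$ with $\MT\bigl(H^{2}(X)(1)\bigr)$, have both been established above. The only verification I would be careful about is that $\pi_{2}$ restricted to $\MT^{+}(X)$ really takes values in $\mathrm{G}_{\mathrm{mot}}\bigl(\mathcal{H}^{2}(X)(1)\bigr)$ rather than just in $\GL\bigl(H^{2}(X)(1)\bigr)$, but this follows from the functoriality of the Hodge realization and the inclusion $\MT(\mathcal{M}_{\mathrm{B}})\subset \mathrm{G}_{\mathrm{mot}}(\mathcal{M})$ recalled in~\S\ref{subsection:basechange}.
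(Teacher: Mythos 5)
Your argument is essentially identical to the paper's: the paper draws the commutative square relating the inclusions $\MT^+(X)\subset\mathrm{G}_{\mathrm{mot}}^+(X)$ and $\MT\bigl(H^2(X)(1)\bigr)\subset\mathrm{G}_{\mathrm{mot}}\bigl(\mathcal{H}^2(X)(1)\bigr)$, notes that the bottom map is an isomorphism by Corollary~\ref{MT} and the right-hand inclusion is an isomorphism by Theorem~\ref{codimension1}, and takes $s$ to be the resulting composite inverse — exactly the two inputs and the same construction you give. Your proposal is correct and matches the paper's proof.
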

 	\begin{proof}
 		We have a commutative diagram
 		\[
 		\begin{tikzcd}
 		\mathrm{G}^+_{\mathrm{mot}}(X)\arrow[two heads]{rr}{\bar{\pi}_2}\arrow[hookleftarrow]{d}{\iota_+} && \mathrm{G}_{\mathrm{mot}}\bigl(\mathcal{H}^2(X)(1)\bigr) \arrow[hookleftarrow]{d}{\iota_2}\\
 		\MT^+(X)\arrow{rr}{{\pi}_2 } && \MT\bigl(H^2(X)(1)\bigr)
 		\end{tikzcd}
 		\]
 		Here, $\iota_+$ and $\iota_2$ denote the natural inclusions; ${\pi}_2$ and $\iota_2$ are isomorphisms due to Corollary~\ref{MT} and Theorem~\ref{codimension1} respectively. We can now take $s=\iota_+\circ( \iota_2 \circ \bar{\pi}_2)^{-1}$.
 	\end{proof}
 	\begin{lemm}\label{commuta}
 	The adjoint action of the group $P(X)\subset\GL\bigl(H^+(X)\bigr)$ on $\mathfrak{gl}\bigl(H^+(X)\bigr)$ restricts to the identity on the Lie algebra $\mathfrak{g}_{\tot}(X)$.
 	\end{lemm}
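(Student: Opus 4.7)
The plan is to show that every $g \in P(X)$ commutes with a generating set of the semisimple Lie algebra $\mathfrak{g}_{\tot}(X)$, namely the operators $\theta$, $L_x$ (for $x \in H^2(X)$) and $\Lambda_x$ (for $x$ satisfying the Lefschetz property). Once this is done, $\ad(g)$ fixes all three graded pieces of $\mathfrak{g}_{\tot}(X)$, and hence the full Lie algebra.

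First I would record two structural properties of the ambient group $\mathrm{G}_{\mathrm{mot}}^+(X) \subset \prod_i \GL\bigl(H^{2i}(X)(i)\bigr)$. The inclusion means that every $g \in \mathrm{G}_{\mathrm{mot}}^+(X)$ preserves the grading, and therefore automatically commutes with $\theta$, which acts as the scalar $2i - n$ on $H^{2i}(X)$. Next, the cup-product maps $\mathcal{H}^{2i}(X)(i) \otimes \mathcal{H}^{2j}(X)(j) \to \mathcal{H}^{2i+2j}(X)(i+j)$ are morphisms in $\mathrm{Mot}_{\mathbb{C}}$, so they are equivariant for the action of $\mathrm{G}_{\mathrm{mot}}^+(X)$; equivalently, every such $g$ acts by algebra automorphisms on $H^+(X)$. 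This immediately gives the identity $g \circ L_x \circ g^{-1} = L_{g \cdot x}$ for any $x \in H^2(X)$ and any $g \in \mathrm{G}_{\mathrm{mot}}^+(X)$.

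Now restrict to $g \in P(X) = \ker(\pi_2)$. By definition $g$ acts trivially on the underlying vector space of $\mathcal{H}^2(X)(1)$, which is just $H^2(X)$. Combined with the preceding display this yields $g L_x g^{-1} = L_x$, so $\ad(g)$ fixes $\mathfrak{g}_2(X)$ pointwise. Conjugating the $\mathfrak{sl}_2$-triple $(L_x, \theta, \Lambda_x)$ by $g$ produces the triple $(L_x, \theta, g \Lambda_x g^{-1})$ because $g$ already commutes with both $L_x$ and $\theta$; by the uniqueness of the third member of an $\mathfrak{sl}_2$-triple recalled in \S\ref{subsection:jacobsonmorozov}, we conclude $g \Lambda_x g^{-1} = \Lambda_x$ whenever $x$ satisfies the Lefschetz property. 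Hence $\ad(g)$ is the identity on $\mathfrak{g}_{-2}(X)$ as well.

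Finally, since $\mathfrak{g}_{\tot}(X)$ is semisimple and evenly graded with only three nonzero pieces, one has $\mathfrak{g}_0(X) = [\mathfrak{g}_2(X), \mathfrak{g}_{-2}(X)]$ (this follows from Theorem \ref{gtot}(a)–(b): the bracket of the two extremal pieces contains $\theta = [L_x, \Lambda_x]$ and generates the whole of $\mathfrak{g}_0 \cong \mathfrak{so}(H) \oplus \mathbb{Q} \cdot \theta$). Since $\ad(g)$ is a Lie algebra automorphism that restricts to the identity on $\mathfrak{g}_{\pm 2}(X)$, it also restricts to the identity on $[\mathfrak{g}_2(X), \mathfrak{g}_{-2}(X)] = \mathfrak{g}_0(X)$, and therefore on all of $\mathfrak{g}_{\tot}(X)$. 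The step most worth double-checking is the compatibility of the motivic action with cup product on the Tate-twisted pieces, but this is intrinsic to the construction of $\mathcal{H}^+(X)$ and causes no difficulty.
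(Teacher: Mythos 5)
Your proof is correct and follows essentially the same route as the paper: you use that $P(X)$ acts by grading-preserving algebra automorphisms that are trivial on $H^2(X)$ to get commutation with $\theta$ and the $L_x$, then invoke uniqueness of the third member of an $\mathfrak{sl}_2$-triple for the $\Lambda_x$, and conclude by generation of $\mathfrak{g}_{\tot}(X)$ by these operators. Your explicit remark that $\mathfrak{g}_0(X)=[\mathfrak{g}_2(X),\mathfrak{g}_{-2}(X)]$ is just a slight elaboration of the paper's appeal to the definition of $\mathfrak{g}_{\tot}(X)$ as the Lie algebra generated by the $L_x$ and $\Lambda_x$.
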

 \begin{proof}
 	Note that $P(X)$ acts on $H^+(X)$ via algebra automorphisms since the cup-product is induced by an algebraic cycle, namely, the small diagonal $\delta\subset X^3$; moreover, by definition, its action preserves the grading and is trivial on $H^2(X)$. Hence, if $p\in P(X)$, then $p$ commutes with $\theta$ and $L_x$, for $x\in H^2(X)$. Further, if~$x$ has the Lefschetz property, then $p$ commutes with~$\Lambda_x$ as well: indeed,~$L_x$,~$\theta$ and $p\Lambda_xp^{-1}$ form an $\mathfrak{sl}_2$-triple, and this forces $p\Lambda_xp^{-1}=\Lambda_x$, see \S\textbf{\ref{subsection:jacobsonmorozov}}. As the various operators $L_x$ and $\Lambda_x$, for $x\in H^2(X)$, generate the Lie subalgebra $\mathfrak{g}_{\tot}(X)\subset\mathfrak{gl}\bigl(H^+(X)\bigr)$, we conclude that~$P(X)$ commutes with the whole of~$\mathfrak{g}_{\tot}(X)$.
 \end{proof} 
 	\begin{proof}[Proof of Proposition~\ref{thesplit}]
 		By Lemma~\ref{section}, $P(X)\cdot \MT^+(X)=\mathrm{G}_{\mathrm{mot}}^+(X)$, and the two subgroups have trivial intersection. By Lemma~\ref{contained} and the above Lemma~\ref{commuta}, $P(X)$ and $\MT(X)^+$ commute. It follows that~$\mathrm{G}_{\mathrm{mot}}^+(X)$ is the direct product of these two subgroups.
 	\end{proof}
 	
 	\section{A sufficient condition}\label{section:half}
 	 	\setcounter{subsection}{-1}
 	\subsection{}\label{subsection:4.0}
 	With notations and assumptions as in \S\textbf{\ref{notations}}, let $X$ be a hyperk\"{a}hler variety over $k$, and assume that $b_2(X)>3$. Consider the weight~$0$ motive 
 	\[
 	\mathcal{H}^+(X)=\bigoplus_i \mathcal{H}^{2i}(X)(i) \in\mathrm{Mot}_{k},
 	\]
 	and write $\mathrm{G}_{\mathrm{mot}}^+(X)$ for its motivic Galois group. Let $H_{\text{B}}^+(X)$ and~$H_{\ell}^+(X)$ denote respectively the Hodge and $\ell$-adic realization of $\mathcal{H}^+(X)$. We write $\MT^+(X)$ for~$\MT\bigl(H_{\text{B}}^+(X)\bigr)$ and $\mathcal{G}_{\ell}^+(X)$ for $\mathcal{G}\bigl(H^+_{\ell}(X)\bigr)$. We identify $H_{\text{B}}^+(X)\otimes\mathbb{Q}_{\ell}$ with $H^+_{\ell}(X)$ via Artin's comparison isomorphism. Then both~$ \MT^+(X)\otimes\mathbb{Q}_{\ell}$ and $\mathcal{G}_{\ell}^+(X)$ are identified with subgroups of $\GL\bigl(H_{\ell}^+(X)\bigr)$.

 	\subsection{}\label{subsection:main1}
 	Conjecturally, under Artin's isomorphism we have~$\MT^+(X)\otimes \mathbb{Q}_{\ell}\cong \mathcal{G}^+_{\ell}(X)^0$. We refer to this statement as the Mumford--Tate conjecture for $\mathcal{H}^+(X)$; it implies the Mumford--Tate conjecture in codimension $j$ for $X$ and for all integers~$j$, and, if~$X$ has trivial odd cohomology, it also implies the Mumford--Tate conjecture in any codimension for any self-power $X^{k}$. 
 	Recall from \S\textbf{\ref{subsection:basechange}} that~$\mathcal{G}^+_{\ell}(X)^0$ is a subgroup of $
 	\mathrm{G}^+_{\mathrm{mot}}(X_{\bar{k}})\otimes\mathbb{Q}_{\ell}\cong\mathrm{G}^+_{\mathrm{mot}}(X_{\mathbb{C}})\otimes\mathbb{Q}_{\ell}$, and that, by Proposition~\ref{thesplit}, we have an equality~$\mathrm{G}^+_{\mathrm{mot}}(X_{\mathbb{C}})=P(X)\times\MT^+(X)$ of subgroups of $\GL\bigl(H_{\mathrm{B}}^+(X)\bigr)$. 
 	
 	\begin{prop}\label{prop:criterion}
 		Assume that $P(X)$ is finite (resp. trivial). Then the Mumford--Tate conjecture (resp. the motivic Mumford--Tate conjecture) holds for $\mathcal{H}^+(X)$.
 		\end{prop}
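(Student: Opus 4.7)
The plan is to combine the direct product decomposition $\mathrm{G}_{\mathrm{mot}}^+(X_{\mathbb{C}})=P(X)\times\MT^+(X)$ provided by Proposition \ref{thesplit} with the base-change stability of motivic Galois groups recalled in \S\textbf{\ref{subsection:basechange}} and with Andr\'e's codimension-one Mumford--Tate theorem (Theorem \ref{codimension1}). As a preliminary step I would replace $k$ by a finite extension so that $\mathrm{G}_{\mathrm{mot}}^+(X_{\bar k})=\mathrm{G}_{\mathrm{mot}}^+(X_{\mathbb{C}})$ and so that $\mathcal{G}_{\ell}^+(X)$ is connected; neither enlargement affects the content of the conclusion, since $\MT^+(X)$ depends only on $X(\mathbb{C})$ and $\mathcal{G}_{\ell}^+(X)^0$ is invariant under finite extensions of the base field.

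The first step would be to establish the inclusion $\mathcal{G}_{\ell}^+(X)^0 \subset \MT^+(X)\otimes\mathbb{Q}_{\ell}$. From the standard containment recalled in \S\textbf{\ref{section:Mumford-Tate}} one has
\[
\mathcal{G}_{\ell}^+(X)^0 \subset \mathrm{G}_{\mathrm{mot}}^+(X_{\bar k})\otimes\mathbb{Q}_{\ell} = \bigl(P(X)\otimes\mathbb{Q}_{\ell}\bigr)\times\bigl(\MT^+(X)\otimes\mathbb{Q}_{\ell}\bigr).
\]
Because $\mathcal{G}_{\ell}^+(X)^0$ is connected, its image under the projection to the first factor lands in the identity component of $P(X)\otimes\mathbb{Q}_{\ell}$. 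When $P(X)$ is finite, this identity component is trivial, so $\mathcal{G}_{\ell}^+(X)^0$ is contained in the second factor, as claimed.

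For the reverse inclusion I would exploit the projection $\pi_2\colon\mathrm{G}_{\mathrm{mot}}^+(X)\twoheadrightarrow\mathrm{G}_{\mathrm{mot}}\bigl(\mathcal{H}^2(X)(1)\bigr)$ of \S\textbf{\ref{subsection:3.0}}. By Proposition \ref{thesplit} together with Corollary \ref{MT}, $\pi_2$ restricts to an isomorphism $\MT^+(X)\cong\MT\bigl(H^2(X)(1)\bigr)$, which remains an isomorphism after base change to $\mathbb{Q}_{\ell}$. On the $\ell$-adic side, a standard coset argument shows $\pi_2\bigl(\mathcal{G}_{\ell}^+(X)^0\bigr)=\mathcal{G}_{\ell}\bigl(H^2_{\ell}(X)(1)\bigr)^0$, and by Theorem \ref{codimension1} the latter equals $\MT\bigl(H^2(X)(1)\bigr)\otimes\mathbb{Q}_{\ell}$. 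Since $\pi_2$ is injective on $\MT^+(X)\otimes\mathbb{Q}_{\ell}$ and $\mathcal{G}_{\ell}^+(X)^0$ already lies inside this subgroup, the image of $\mathcal{G}_{\ell}^+(X)^0$ filling the whole target forces $\mathcal{G}_{\ell}^+(X)^0=\MT^+(X)\otimes\mathbb{Q}_{\ell}$, which is the Mumford--Tate conjecture for $\mathcal{H}^+(X)$.

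For the motivated statement under the stronger hypothesis that $P(X)$ is trivial, Proposition \ref{thesplit} directly yields $\mathrm{G}_{\mathrm{mot}}^+(X_{\bar k})=\MT^+(X)$, which together with the equality just proved supplies both halves of Conjecture \ref{mtcmot} for $\mathcal{H}^+(X)$. I expect no genuine obstacle beyond the inputs already in place: the substantive mathematical content is Andr\'e's codimension-one Theorem \ref{codimension1} and the splitting Proposition \ref{thesplit}, while the remaining argument is a formal diagram chase in which the connectedness of $\mathcal{G}_{\ell}^+(X)^0$ is used to kill the zero-dimensional factor $P(X)\otimes\mathbb{Q}_{\ell}$.
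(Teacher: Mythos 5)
Your proposal is correct and follows essentially the same route as the paper: the paper's proof is the same diagram chase, combining Proposition \ref{thesplit} (so that $P(X)$ finite forces $\mathrm{G}_{\mathrm{mot}}^+(X_{\bar k})^0=\MT^+(X)$), Corollary \ref{MT} (injectivity of $\pi_2$ on $\MT^+(X)$), and Theorem \ref{codimension1} on the $H^2$ level, with the same handling of the trivial case for the motivated statement. Your version merely spells out the two inclusions (connectedness killing the finite factor, and surjectivity of $\pi_2$ on identity components) that the paper leaves implicit in its commutative diagram.
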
 
 	\begin{proof}
 		Consider the commutative diagram
 		\[
 		\begin{tikzcd}
 		\MT^+(X)\otimes \mathbb{Q}_{\ell} \arrow[hookrightarrow]{r} \arrow["\sim"{sloped, above}]{d} & \mathrm{G}_{\mathrm{mot}}^+(X_{\bar{k}})\otimes\mathbb{Q}_\ell \arrow[two heads]{d} \arrow[hookleftarrow]{r} & \mathcal{G}^+_{\ell}(X)^0\arrow[two heads]{d}\\
 		\MT\bigl(H_{\text{B}}^2(X)(1)\bigr)\otimes\mathbb{Q}_{\ell}\arrow{r}{\sim} & \mathrm{G}_{\mathrm{mot}}\bigl(\mathcal{H}^2(X_{\bar{k}})(1) \bigr)\otimes\mathbb{Q}_\ell & \mathcal{G} \bigl(H^2_{\ell}(X)(1)\bigr)^0 \arrow[swap]{l}{\sim}
 		\end{tikzcd}
 		\]
 		The horizontal arrows on the bottom are isomorphisms due to Theorem~\ref{codimension1}, and the vertical map on the left is an isomorphism thanks to Corollary~\ref{MT}. By Proposition~\ref{thesplit} we have $\mathrm{G}_{\mathrm{mot}}^+(X_{\bar{k}})=P(X)\times\MT^+(X)$; if $P(X)$ is finite, it follows that we have $\mathrm{G}_{\mathrm{mot}}^+(X_{\bar{k}})^0=\MT^+(X)$. Hence, replacing in the above diagram $\mathrm{G}_{\mathrm{mot}}^+(X_{\bar{k}})$ with its connected component of the identity, also the leftmost arrow on the top row becomes an isomorphism. Thus all arrows in the diagram become isomorphisms, and we obtain 
 		\[
 		\MT^+(X)\otimes\mathbb{Q}_{\ell} = \mathrm{G}_{\mathrm{mot}}^+(X_{\bar{k}})^0\otimes \mathbb{Q}_{\ell} = \mathcal{G}_{\ell}^+(X)^0.
 		\]
 		Moreover, if $P(X)$ is trivial then $\mathrm{G}_{\mathrm{mot}}^+(X_{\bar{k}})$ is connected and equal to $\MT^+(X)$, and therefore the motivic Mumford--Tate conjecture holds in this case.
 	\end{proof}
 
 \subsection{Proof of Theorem \ref{abelianityMTC}}\label{proofsidethm}
 By the above Proposition \ref{prop:criterion}, it suffices to show that the assumption of abelianity of all even K\"{u}nneth components $\mathcal{H}^{2i}(X_{\bar{k}})$ of the motive of $X$ implies that $P(X)$ is trivial. Note that this assumption is equivalent to the abelianity of $\mathcal{H}^+(X_{\bar{k}})$. But then the desired conclusion follows immediately from Proposition \ref{thesplit} and Andr\'{e}'s theorem \ref{thm:abelianMH}: indeed, the first result implies that $\mathrm{G}_{\mathrm{mot}}^+(X_{\bar{k}})= P(X) \times \MT^+(X) $ and the second that $\mathrm{G}_{\mathrm{mot }}^+(X_{\bar{k}})=\MT^+(X)$.
 
 	\section{Proof of Theorem~\ref{mtcK3type}} \label{k3m}
 	 	\setcounter{subsection}{-1}
 	\subsection{}\label{subsection:5.0}
 	
 	In this section we prove Theorem \ref{mtcK3type}. To this end, we will establish the finiteness of the group $P(X)$ from \S\textbf{4.0} when $X$ is a hyperk\"{a}hler variety of~$\mathrm{K}3^{[m]}$ or $\mathrm{OG}10$-type; Theorem \ref{mtcK3type} then follows via Proposition \ref{prop:criterion}. We can, and will, work over the complex numbers, since $\mathrm{G}_{\mathrm{mot}}^+(X_{\bar{k}})\cong \mathrm{G}_{\mathrm{mot}}^+(X_{\mathbb{C}})$.
 	
 	Recall that a complex hyperk\"{a}hler variety $X$ is of $\mathrm{K3}^{[m]}$-type if it is a deformation of a Hilbert scheme of $0$-dimensional subschemes of length $m$ on some $\mathrm{K}3$ surface. If $m=1$, then $X$ is the original $\mathrm{K}3$ surface; we will assume $m\geq 2$. In this case $\dim X=2m$, the odd cohomology of $X$ vanishes, and the second Betti number equals~$23$, \cite{gottsche1990}. 
 	We say that $X$ is of $\mathrm{OG}10$-type if it is deformation equivalent to O'Grady's ten dimensional hyperk\"{a}hler variety constructed in \cite{O'G99}. In this case the odd Betti numbers of $X$ vanish as well, and we have $b_2(X)=24$, see \cite{de2019hodge}. 
 	
 	\subsection{}
 	 Let $\mathrm{Aut}\bigl(H^+(X)\bigr)$ be the group of automorphisms of the graded $\mathbb{Q}$-algebra $H^+(X)=\bigoplus_i H^{2i}(X)$. Let $K(X) \subset \mathrm{Aut}\bigl(H^+(X)\bigr)$ be the kernel of the natural restriction map 
 	 $\mathrm{Aut}\bigl(H^+(X)\bigr) \to \GL\bigl(H^2(X)\bigr)$. The group $P(X)$ acts via algebra automorphisms, and, by construction, its action is trivial in degree $2$. Hence, we have 
 		\[
 		P(X)\subset K(X).
 		\] 
 		To conclude the proof of Theorem \ref{mtcK3type} it therefore suffices to establish the following.
 		
 		\begin{prop}\label{prop:autofiniteness}
 			Assume $X$ is a hyperk\"{a}hler variety of either $\mathrm{K}3^{[m]}$ or $\mathrm{OG}10$-type. Then $K(X)$ is a finite group.
 			\end{prop}
 		
 		As we are going to explain, this is a consequence of results due to Markman \cite{markman2008} in the first case and due to Green-Kim-Laza-Robles \cite{green2019llv} in the second case.
 		
 		\subsection{}
 		We start by recalling from \cite{looijenga1997lie} some additional facts on the representation of $\mathfrak{g}_{\tot}(X)$ on the cohomology. Let $\int_X$ denote the projection $H^+(X)\to H^{4n}(X)\cong\mathbb{Q}$, where $\dim(X)=2n$. Consider the Poincar\'{e} pairing $\phi\colon H^+(X)\otimes H^+(X)\to \mathbb{Q}$, which is defined via
 		\[
 		\phi(\alpha,\beta)= (-1)^{q}\int_{X} \alpha\wedge\beta,
 		\]
 		for $\alpha$ of degree $2n+2q$. It is shown in \cite[Proposition~1.6 and its proof]{looijenga1997lie}, that the Lie algebra $\mathfrak{g}_{\tot}(X)$ preserves infinitesimally the Poincar\'{e} pairing, and that~$\phi$ restricts to a non-degenerate pairing on every $\mathfrak{g}_{\tot}(X)$-submodule of $H^+(X)$.

 		\subsection{} 
 		The group $K(X)$ acts on $H^+(X)$ via graded algebra automorphisms and it acts trivially in degree $2$; it follows that $K(X)$ preserves the pairing $\phi$. Moreover, the argument used to prove Lemma \ref{commuta} shows that this group commutes with $\mathfrak{g}_{\tot}(X)$. 
 		
 		\begin{proof}[Proof of Proposition \ref{prop:autofiniteness} for the $\mathrm{OG}10$-type] 
 		Assume $X$ is of $\mathrm{OG}10$-type. The representation of $\mathfrak{g}_{\tot}(X)$ on the cohomology has been fully described in \cite[Theorem 1.1-(iv)]{green2019llv}. We have 
 		\[
 		H^+(X)= V_1\oplus V_2,
 		\]
 		where $V_1$ is the subalgebra generated by $H^2(X)$ and $V_2$ is an absolutely irreducible $\mathfrak{g}_{\tot}(X)$-representation. We deduce that $K(X)$ is a subgroup of $$\bigl(\End(V_2)^{\mathfrak{g}_{\tot}(X)}\bigr)^{\times}\cong \mathbb{Q}^{\times},$$ by Schur's lemma. Further, we know that the pairing $\phi$ restricts to a non-degenerate invariant form on $V_2$, and we deduce that $K(X)\subset \{1, -1\}$.
 		\end{proof}
 		
 		\subsection{}
 		
 		As apparent from the proof, for the $\mathrm{OG}10$-type the decomposition of the cohomology into $\mathfrak{g}_{\tot}(X)$-isotypical components already imposes the desired finiteness result. The analogous decomposition for the $\mathrm{K}3^{[m]}$-type becomes more and more complicated as the dimension increases, see \cite{green2019llv}. Nevertheless, it becomes more manageable once the algebra structure is taken into account, thanks to the following result of Markman. From now on, we assume that $X$ is a variety of $\mathrm{K}3^{[m]}$-type.
 		
 		For $l\geq 0$, we let $A_{2l} \subset H^+(X)$ be the subalgebra generated by~$\bigoplus_{j\leq l} H^{2j}(X)$. Note that $A_{2l}=H^+(X)$ for $l\geq m$. Recall from Corollary~\ref{rhoplus} that we have a representation~$\tilde{\rho}^+$ of $\SO(H)$ on $H^+(X)$.

 		\begin{theo}[Markman]\label{markmantheorem}
 		 	For all $i\geq 1$, there exists a subspace $C^{2i}\subset H^{2i}(X)$ such with the following properties.
 		 	\begin{enumerate}
 		 		\item[$\mathrm{(a)}$] We have a $\mathfrak{g}_{0}(X)$-invariant decomposition
 		 		\[
 		 		H^{2i}(X)=\bigl(A_{2i-2}\cap H^{2i}(X)\bigr)\oplus C^{2i}.
 		 		\]
 		 		Note that this implies $C^2=H^2(X)$ and $C^{2i}=0$ for $i>m$. Each $C^{2i}$ is in particular a subrepresentation for $\mathfrak{g}_0(X)$ and, hence, for $\SO(H)$. Moreover, the $\mathfrak{g}_{\tot}(X)$-module generated by $C^{2i}$ is orthogonal to $A_{2i-2}$ with respect to $\phi$.
 		 		\item[$\mathrm{(b)}$] The sum $\bigoplus_{i\geq 1} C^{2i}$ generates the algebra $H^+(X)$.
 		 		\item[$\mathrm{(c)}$] The $\SO(H)$-module $C^{2i}$ is a subrepresentation of the sum of a copy of the standard representation with a one dimensional trivial representation, for all $i\geq 2$.
 		 	\end{enumerate}
 		 \end{theo}
 		Parts (a) and (b) are proven in \cite[Corollary~4.6]{markman2008}, while (c) is [\textit{loc. cit.}, Lemma~4.8]. We can now conclude the proof of our main result.
 		
 		\begin{proof}[Proof of Proposition \ref{prop:autofiniteness} for the $\mathrm{K}3^{[m]}$-type]
 		We claim first of all that each subspace $C^{2i}$ is stable under the action of $K(X)$. In fact, since $K(X)$ acts via graded algebra automorphisms, the subalgebras $A_{2l}$ are $K(X)$-stable for all $l$. Since $\phi$ is $K(X)$-invariant, it follows that the orthogonal complement to each $A_{2l}$ is preserved as well; as $K(X)$ acts compatibly with the grading, it indeed stabilizes $C^{2i}$, for all~$i$.
 		
 		The subspaces $C^{2i}$ generate the cohomology by Theorem \ref{markmantheorem}.(b), and $K(X)$ commutes with the representation $\tilde{\rho}$. Hence, we have 
 		\[
 		K(X)\subset \prod_{i\geq  2} \bigl(\End(C^{2i})^{\SO(H)}\bigr)^{\times}.
 		\]
 		Let $V\subset C^{2i}$ be an irreducible-$\SO(H)$ representation. By Theorem \ref{markmantheorem}.(c), the representation $V$ is absolutely irreducible and it appears in $C^{2i}$ with multiplicity one; it follows that $V$ is stable under $K(X)$ as well. By Schur's lemma, each element of $K(X)$ acts on $\mathfrak{g}_{\tot}(X)\cdot V$ via multiplication by some rational number. On the other hand $K(X)$ preserves the form $\phi$, whose restriction to the $\mathfrak{g}_{\tot}(X)$-module generated by $V$ is non-degenerate, and therefore the action of $K(X)$ on $\mathfrak{g}_{\tot}(X)\cdot V$ factors through $\{1, -1\}\cong \mathbb{Z}/2\mathbb{Z}$. Using again Theorem~\ref{markmantheorem}.(c), we conclude that, for all $i$, $\bigl(\End(C^{2i})^{\SO(H)}\bigr)^{\times}$ is a subgroup $\mathbb{Z}/2\mathbb{Z}^2$, and hence we have
 		\[
 		K(X) \subset \prod_{i = 2}^{m} \mathbb{Z}/2\mathbb{Z}^2.
 		\] 		
 		\end{proof}
 		Theorem \ref{mtcK3type} is proved.
 		
 		\begin{rema}
 		The conclusion of Proposition \ref{prop:autofiniteness} does not hold for the remaining deformation types $\mathrm{Kum}_m$ and $\mathrm{OG6}$. This can be checked using the description of the $\mathfrak{g}_{\tot}(X)$-representation of the cohomology given in \cite{green2019llv}: in fact, for these deformation types, there are $\mathfrak{g}_{\tot}(X)$-representations which appear in the cohomology with higher multiplicities, which cannot be explained only by taking into account the algebra structure on the cohomology. 
 		\end{rema}
 	
 	\bibliographystyle{smfalpha}
 	
 	\newcommand{\etalchar}[1]{$^{#1}$}
 	\providecommand{\bysame}{\leavevmode ---\ }
 	\providecommand{\og}{``}
 	\providecommand{\fg}{''}
 	\providecommand{\smfandname}{\&}
 	\providecommand{\smfedsname}{\'eds.}
 	\providecommand{\smfedname}{\'ed.}
 	\providecommand{\smfmastersthesisname}{M\'emoire}
 	\providecommand{\smfphdthesisname}{Th\`ese}

 	\end{document}